\newtheorem{Proposition}{Proposition}
\newtheorem{Lemma}{Lemma}
\newcommand{\N}{{\mathcal{N}}}
\newcommand{\PP}{{\mathbb{P}}}
\newcommand{\EE}{{\mathbb{E}}}
\newcommand{\U}{{\mathbb{U}}}
\newcommand{\D}{{\mathbb{D}}}
\newcommand{\I}{{\mathbb{I}}}
\newcommand{\gv}{\,|\,}
\title{On a Variation of Gambler's Ruin Problem}
\author{Zhiyi Chi$^1$ and Vladimir Pozdnyakov$^1$$^2$}
\thanks{\\
1. Department of Statistics, University of Connecticut, 215 Glenbrook Road, Storrs, CT 06269-4120\\
2. Corresponding author: {\tt vladimir.pozdnyakov@uconn.edu}}
\date{}
\begin{document}

\maketitle

\doublespacing


{
\noindent{\textbf{Abstract}}
Assume that letters (from a finite alphabet) in a text form a Markov chain. We track two distinct words, $U$ and $D$.
A gambler gains 1 point for each occurrence of $U$ (including overlapping occurrences) and loses 1 point for each
occurrence of $D$ (also including overlapping occurrences). We determine the probability of gaining $A$ points before
losing $B$ points, where $A$ and $B$ are integers. Additionally, we find the expected waiting time until one of the two
events---gaining $A$ points or losing $B$ points---occurs.

\medskip
\noindent{\textbf{Keywords:}} Gambler's Ruin, Markov Chain, Stopping Time, Martingale

\medskip
\noindent{\textbf{2020 Mathematics Subject Classification:}} 60G42, 60J10
}

\section{Introduction}
Consider a text generated by a Markovian mechanism from a finite alphabet.
Fix two distinct words such that neither word is a subword of the other. A gambler gains 1 point for each
occurrence of the first word and loses 1 point for each occurrence of the second word.
Points are gained or lost for each overlapping occurrence. Let $A$ and $B$ be two integers.
The game stops if the gambler reaches $A$ points or loses $B$ points. We are interested in
the probability of reaching $A$ first and the expected duration of the game.

This is a variation of the classical ruin problem (for example, see \cite{Feller1968}).
A comprehensive review of the gambler's ruin problem (both first-step analysis and martingale
approaches), where the $\pm 1$ payments form a sequence of independent identically distributed
(i.i.d.) random variables, can be found in \cite{Steele2001}.
The case where the $\pm 1$ payments are Markovian was first addressed in \cite{Mohan1955}.

In this scenario, we have three different payments (1, 0, and -1).  The gambler's ruin problem
for this type of random walk with independent increments was examined in \cite{Gut2013}.
However, the sequence of payments from the two-word game does not form a Markov Chain,
even if the text is generated by an i.i.d. source. Nevertheless, we will demonstrate that,
through an appropriate embedding, this gambler's problem can be reduced to the gambler's ruin
problem for correlated random walks, as studied in \cite{Mohan1955}.

\section{A Gambler's Ruin Problem for Two-Word Game}

Let $\{Y_i\}_{i\geq 1}$ be a time-homogeneous irreducible Markov Chain with states
from a finite alphabet $\Delta$ with at least two letters. Let $$U=u_1\dots u_K \quad\mbox{ and }\quad
D=d_1\dots d_M$$ be two distinct {\it words} over alphabet $\Delta$, where $K,M\geq 1$. 
We assume that that words $U$ and $D$ are not contiguous subsequences of each other and that both
\begin{equation}\label{word*probability}
\PP(Y_1=u_1,\dots,Y_K=u_K)>0\quad\quad
\mbox{and}\quad\quad
\PP(Y_1=d_1,\dots,Y_M=d_M)>0.
\end{equation}
{Since the finite Markov chain $\{Y_i\}_{i\geq 1}$ is irreducible, it follows from (\ref{word*probability})
that both words will occur infinitely often. Moreover, the expected waiting time until the first
occurrence of a word (or between consecutive occurrences) is finite.}

Let
$$
\U_i=\left\{\begin{array}{ll}
             0,& \mbox{ if } 1\leq i<K, \\
             \I_{Y_{i-K+1}=u_1,\dots,Y_i=u_K},& \mbox{ if } i\geq K.
           \end{array}
\right.
$$
and
$$
\D_i=\left\{\begin{array}{ll}
             0,& \mbox{ if } 1\leq i<M, \\
             \I_{Y_{i-M+1}=u_1,\dots,Y_i=u_M},& \mbox{ if } i\geq M.
           \end{array}
\right.
$$
Then the total number of points at time $n$ is given by  $$ S_n=\sum_{i=1}^n (\U_i-\D_i).$$
Given integers $A,B>0$, we define the following waiting time
\begin{equation*}\label{stopping*time}
\tau=\min\left\{ n\geq 1: S_n=A \mbox{ or } S_n=-B\right\}.
\end{equation*}
The objective is to find the probability $\alpha=\PP(S_\tau=A)$ and the
expected duration of the game $\EE(\tau)$.

However, note that $\PP(\tau < \infty)$ is not necessarily equal to 1. It is easy
to construct a Markov chain where the words $U$ and $D$ always appear in pairs.
For example, let $\{Y_i\}_{i\geq 1}$ be an i.i.d. sequence of letters from
$\Delta = \{0, 1\}$, with $\PP(1) = p$, $0 < p < 1$. Then, for the words $U = 10$ and $D = 01$,
$\PP(\tau = \infty)=1$ for any $A, B > 1$. Conditions on the transition matrix
of $\{Y_i\}_{i\geq 1}$ that guarantee $\PP(\tau < \infty) = 1$ for any two words $U$
and $D$ that satisfy (\ref{word*probability}) will be discussed in the last section.

\section{Two Embedded Markov Chains}\label{embedded*mc*section}

Define the stopping times
\begin{equation*}\label{first*word*stopping*time}
\tau_1=\min\{n\geq 1: \U_n=1 \mbox{ or } \D_n =1\},
\end{equation*}
and for $k>1$
\begin{equation*}\label{k*word*stopping*time}
\tau_k=\min\{n>\tau_{k-1}: \U_n=1 \mbox{ or } \D_n =1\}.
\end{equation*}
Let
$$
X_k=\left\{\begin{array}{rl}
             1,& \mbox{ if }  \U_{\tau_k}=1, \\
             -1,& \mbox{ if } \D_{\tau_k}=1.
           \end{array}
\right.
$$
Then $\{X_k\}_{k\geq 1}$ is a time-homogeneous two-state Markov chain with an initial
distribution and transition matrix that can be computed using standard methods.
More specifically, one can use techniques developed for the occurrence of patterns,
similar to how it is done in \cite{Pozdnyakov2025}, or apply the first-step analysis
to another Markov chain associated with the original Markov chain $\{Y_k\}_{k\geq 1}$.
In this paper, we will employ the first-step analysis, as described
in Section~\ref{standard*caculations}.

We also have that
$$S_{\tau_k}=X_1+\cdots+X_k,$$
and, as a consequence, $$\alpha=\PP(S_\tau=A)=\PP(X_1+\cdots+X_T=A),$$
where
$$T=\min\{k\geq 1: X_1+\cdots+X_k=A\mbox{ or }-B\}.$$ Thus, finding ruin
probability $\alpha$ and $\EE(T)$ is equivalent to the gambler's ruin problem
for correlated random walk (see \cite{Mohan1955}). {Note that $\PP(T < \infty) = 1$
if and only if $\PP(\tau < \infty) = 1$. A simple, word-specific necessary and
sufficient condition for $\PP(T < \infty) = 1$ is that the two-state
Markov chain $\{X_k\}_{k \geq 1}$ is aperiodic. All we need to do is exclude the
perfect alternation between $1$ and $-1$.}

However, since Markov chain $\{X_k\}_{k\geq 1}$ ignores zero payments, it cannot be
directly  used to determine the mean duration of the game, $\EE(\tau)$. For this, we need
to introduce a different embedded Markov chain. Consider the finite, {\it ordered in a particular way},
state space $\tilde{\Delta}$, which consists of the following words over the alphabet $\Delta$:
(1) all the letters from $\Delta$, (2) all the prefixes of the word $U$ (excluding the first letter,
but including $U$), and (3) all the prefixes of the word $D$ (excluding the first letter,
but including $D$). If two prefixes of $U$ and $D$ are identical, they count as one state.
Let
$$Z_k=\mbox{the longest suffix of word $Y_1Y_2\dots Y_k$ that coinsides with a word from $\tilde{\Delta}$}.$$
Then $\{Z_k\}_{k\geq 1}$ is a time-homogeneous finite-state Markov chain.
A similar look-back construction in the context of pattern occurrence was first introduced in \cite{GerberLi1981}.
{Next, let us give simple but important statements about Markov chain $\{Z_k\}_{k\geq 1}$.
\begin{Proposition}\label{Markov*Chain*Z}
If words $U$ and $D$ satisfy (\ref{word*probability}), then
\begin{enumerate}
\item Markov chain $\{Z_k\}_{k\geq 1}$ has exactly one recurrent positive class,
\item both $U$ and $D$  belong to that class,
\item Markov chain $\{Z_k\}_{k\geq 1}$ has a unique stationary distribution.
\end{enumerate}
\end{Proposition}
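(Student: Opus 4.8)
The plan is to exploit that $\{Z_k\}_{k\ge1}$ is a deterministic function of the finite irreducible chain $\{Y_i\}_{i\ge1}$, and that hypothesis~(\ref{word*probability}) together with the non-subword assumption forces the states $U$ and $D$ of $\tilde\Delta$ to be accessible from every state. First I would record two elementary consequences of (\ref{word*probability}): by time-homogeneity and the Markov property, $\PP(Y_1=u_1,\dots,Y_K=u_K)=\PP(Y_1=u_1)\prod_{j=2}^{K}\PP(Y_j=u_j\mid Y_{j-1}=u_{j-1})$, so every one-step transition $u_{j-1}\to u_j$ of the $Y$-chain has positive probability, and similarly every $d_{j-1}\to d_j$. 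Combined with irreducibility of $\{Y_i\}$ on the finite alphabet $\Delta$, this shows that from \emph{any} letter of $\Delta$ there is a finite $Y$-path of positive probability ending with the block $u_1\cdots u_K$ (steer first to $u_1$, then follow $U$), and likewise one ending with $d_1\cdots d_M$.

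The second step, and the only slightly delicate one, is the identification: $Z_n=U$ precisely when $Y_{n-K+1}\cdots Y_n=u_1\cdots u_K$ (and, symmetrically, $Z_n=D$ precisely when $D$ ends at position $n$). One direction is immediate from the definition of $Z_n$ as a suffix of $Y_1\cdots Y_n$. For the other, I must rule out that some word of $\tilde\Delta$ strictly longer than $U$ is simultaneously a suffix of $Y_1\cdots Y_n$; such a word would have to be a prefix of $D$ of some length $\ell>K$, and then its last $K$ letters would equal $u_1\cdots u_K$, making $U$ a contiguous subsequence of $D$ --- contrary to hypothesis. Hence at the instant $U$ occurs the longest admissible suffix is exactly $U$, i.e.\ the $Z$-chain sits in state $U$; this is the one place where the non-subword assumption is genuinely used.

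Combining the two steps completes the argument. Since $\{Z_k\}$ moves only through $\tilde\Delta$ and $Y_k$ is always the last letter of $Z_k$, the $Y$-path of Step~1 translates into a positive-probability $Z$-path from any $z\in\tilde\Delta$ to the state $U$ (run $Y$ from the last letter of $z$ along that path); thus $U$, and likewise $D$, is accessible from every state of $\tilde\Delta$. A recurrent communicating class is closed --- no state outside it is accessible from within --- so it must contain $U$; as distinct communicating classes are disjoint, there is exactly one recurrent class, and, the state space being finite, it is positive recurrent and contains both $U$ and $D$, giving (1) and (2). For (3) I would invoke standard finite Markov chain theory: a stationary distribution always exists and is carried by the recurrent states, and since there is a single recurrent class $C$, the chain restricted to $C$ is irreducible and has a unique stationary distribution, whose zero-extension to $\tilde\Delta$ is the unique stationary distribution of $\{Z_k\}$. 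I expect Step~2 --- the suffix-length bookkeeping --- to be the main obstacle, since it is exactly where the hypotheses on $U$ and $D$ must be brought to bear; everything else is routine first-step analysis and textbook classification of finite chains.
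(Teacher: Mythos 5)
Your argument is correct and follows essentially the same route the paper sketches: it establishes that $U$ and $D$ are reachable from every state of $\tilde\Delta$ (via the positive-probability letter paths guaranteed by (\ref{word*probability}) and irreducibility, plus the suffix identification $Z_n=U$ exactly when $U$ ends at $n$, where the non-subword hypothesis enters) and then invokes standard finite-chain classification to get a single positive recurrent class containing both words and a unique stationary distribution. The paper states only these two key observations without detail, so your write-up is a faithful filling-in of the intended proof.
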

The proof is straightforward, and it is based on the following key observations:
(1) states $U$ and $D$ are recurrent and positive and
(2) both states $U$ and $D$ are reachable from  any other states in $\tilde{\Delta}$.}

Note that $$\I_{Z_{\tau_k}=U}=\I_{X_k=1}
\mbox{ and }\I_{Z_{\tau_k}=D}=\I_{X_k=-1},$$ and, as a consequence, both the sequence of stopping
times $\{\tau_k\}_{k\geq 1}$ and Markov chain $\{X_k\}_{k\geq 1}$ can be introduced via embedded
Markov chain $\{Z_k\}_{k\geq 1}$. More specifically,
\begin{equation*}\label{first*word*stopping*time*via*Y}
\tau_1=\min\{n\geq 1: Z_n=U \mbox{ or } Z_n =D\},
\end{equation*}
for $k>1$,
\begin{equation*}\label{k*word*stopping*time*via*Y}
\tau_k=\min\{n>\tau_{k-1}: Z_n=U \mbox{ or } Z_n =D\},
\end{equation*}
and
\begin{equation*}\label{X*via*Y}
X_k=\left\{\begin{array}{rl}
             1,& \mbox{ if }  Z_{\tau_k}=U, \\
             -1,& \mbox{ if } Z_{\tau_k}=D.
           \end{array}
\right.
\end{equation*}

Let $\gamma_1=\tau_1$ and $\gamma_k=\tau_k-\tau_{k-1}$, $k\geq 2$, then
$$\tau=\tau_T=\gamma_1+\cdots+\gamma_T.$$
Our next step is to relate $\EE(\tau)$ and $\EE(T)$.

\section{Formula for $\EE(\tau)$ via $\alpha$ and $\EE(T)$}

By the strong Markov property, conditional on $X_1$, $\gamma_1$ is
independent of the other $X_j$'s, and conditional on $X_{i-1}$ and
$X_i$, $i\ge2$, $\gamma_i$ is independent of the other $X_j$'s.  Then we get
\[
  \EE(\gamma_j\gv X_1,\dots, X_{T})
  =
  \begin{cases}
    \EE(\gamma_1\gv X_1), & \text{if~} j=1,\\
    \EE(\gamma_j\gv X_{j-1}, X_j), &
    \text{else}.
  \end{cases}
\]
Let
\[
  \Gamma_n = \EE(\gamma_1\gv X_1) + \sum_{2\le j\le n} \EE(\gamma_j \gv
  X_{j-1}, X_j).
\]
It follows that
\[
  \EE[\tau]
  =
  \EE[\EE(\gamma_1+\cdots+ \gamma_{T}\gv X_1,\dots, X_{T})]
  = \EE(\Gamma_{T}).
\]
Since the $X_j$'s are Markov,
\begin{align*}
  \xi_1& = \EE(\gamma_1\gv X_1) - \EE\big[\EE(\gamma_1\gv X_1)\big]
  = \EE(\gamma_1\gv X_1) - \EE(\gamma_1),
  \\
  \xi_j
  &= \EE(\gamma_j\gv X_{j-1}, X_j)
  - \EE\big[\EE(\gamma_j\gv X_{j-1}, X_j)\gv X_{j-1}\big]
  = \EE(\gamma_j\gv X_{j-1}, X_j)
  - \EE(\gamma_j\gv X_{j-1}),
\end{align*}
are martingale differences with respect to the filtration generated by
the $X_j$'s.  Then by Optional Stopping Theorem we have that
\[
  \EE(\Gamma_{T})
  =
  \EE\left[\sum^{T}_{j=1} \xi_j
    +
    \EE(\gamma_1) + \sum^{T}_{j=2}
    \EE(\gamma_j\gv X_{j-1})\right]
  =
  \EE(\gamma_1) + \EE\left[\sum^{T-1}_{j=1}
    \EE(\gamma_{j+1}\gv X_j)\right].
\]
One can verify that
\[
  \EE(\gamma_{j+1}\gv X_j)
  = a X_j + b,
\]
where
\[
  a=\frac{1}{2}\big[\EE(\gamma_2\gv X_1=1) - \EE(\gamma_2\gv X_1=-1)\big],
  \quad
  b=\frac{1}{2}\big[\EE(\gamma_2\gv X_1=1) + \EE(\gamma_2\gv X_1=-1)\big].
\]
Then
\begin{align*}
  \EE\left[\sum^{T-1}_{j=1}
    \EE(\gamma_{j+1}\gv X_j)\right]
  &=
  \EE\left[a \sum^{T-1}_{j=1} X_j+ b(T-1)\right]
  =
  a [\EE(S_{\tau}) - \EE(X_{T})]
  + b[\EE(T) - 1].
\end{align*}
Since $$\EE(S_{\tau}) = A \alpha - B(1-\alpha)=(A+B)\alpha-B$$
and
$$\EE(X_{T}) =\EE(\U_\tau-\D_\tau)= 2\alpha-1,$$
by combining the above formulas, we get the following result. {
\begin{Proposition}
If Markov chain  $\{X_k\}_{k\geq 1}$ ia aperiodic, then
\begin{equation}\label{tau*via*T}
  \EE(\tau)
  =
  \EE(\gamma_1) + a\big[(A + B-2)\alpha - (B-1)\big]
  + b\big[\EE(T) - 1\big].
\end{equation}
\end{Proposition}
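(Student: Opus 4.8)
The plan is to assemble equation~(\ref{tau*via*T}) by following exactly the chain of identities already laid out in Section~4, treating the displayed computations there as the skeleton of the argument and supplying the verifications that were asserted without proof. The role of the aperiodicity hypothesis on $\{X_k\}_{k\geq 1}$ is to guarantee $\PP(T<\infty)=1$ (as noted in Section~3), and in fact one needs the stronger fact $\EE(T)<\infty$ together with $\EE(\tau)<\infty$; I would first record why these hold. Since $T$ is the absorption time of a finite-state Markov chain (the bivariate chain tracking $(X_{k-1},X_k)$ and the running partial sum restricted to $\{-B+1,\dots,A-1\}$) with all absorbing states reachable, $\EE(T)<\infty$ by the standard geometric-tail bound. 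Then $\tau=\gamma_1+\cdots+\gamma_T$ with each $\gamma_j$ having conditional mean bounded by $\max_{z\in\tilde\Delta}\EE(\text{first return to }\{U,D\}\mid Z=z)<\infty$ (finite by Proposition~\ref{Markov*Chain*Z} and the remark preceding it), so Wald-type reasoning gives $\EE(\tau)<\infty$. These finiteness facts are what license the optional stopping and the interchange of expectation and summation used below.

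Next I would justify the three computational claims in order. (i) The conditional-independence display for $\EE(\gamma_j\mid X_1,\dots,X_T)$ follows from the strong Markov property applied at the stopping times $\tau_{j-1}$: given $Z_{\tau_{j-1}}$ (equivalently $X_{j-1}$), the future excursion determining $\gamma_j$ and $X_j$ is conditionally independent of the past, and within that future $\gamma_j$ is a deterministic functional of the path up to $\tau_j$, whose law given the endpoints $(X_{j-1},X_j)$ no longer depends on $j$ by time-homogeneity. One must check measurability of $T$ with respect to $\{X_k\}$ — it is, since $T$ depends only on the $X$'s — so conditioning on $(X_1,\dots,X_T)$ is legitimate. (ii) The martingale-difference property of $\xi_1,\xi_j$ is immediate from the tower property once (i) is in hand, and the optional stopping theorem applies to $\sum_{j\le n}\xi_j$ because $T$ is a bounded-in-expectation stopping time and the increments $\xi_j$ are bounded (each $\gamma_j$ has uniformly bounded conditional mean), giving $\EE[\sum_{j=1}^T\xi_j]=0$. (iii) The linear form $\EE(\gamma_{j+1}\mid X_j)=aX_j+b$ holds because $X_j\in\{1,-1\}$, so any function on a two-point set is affine; solving the two-point interpolation yields the stated $a,b$, and for $j\ge1$ the value $\EE(\gamma_{j+1}\mid X_j=\pm1)=\EE(\gamma_2\mid X_1=\pm1)$ by time-homogeneity of $\{X_k\}$ and $\{Z_k\}$.

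With these in place, the remaining algebra is bookkeeping: substitute $\EE(\gamma_{j+1}\mid X_j)=aX_j+b$ into $\EE[\sum_{j=1}^{T-1}\EE(\gamma_{j+1}\mid X_j)]$, use $\sum_{j=1}^{T-1}X_j=S_\tau-X_T$ (telescoping $S_{\tau_k}=X_1+\cdots+X_k$ at $k=T$), and insert the two closed forms $\EE(S_\tau)=(A+B)\alpha-B$ (from $S_\tau\in\{A,-B\}$ and $\PP(S_\tau=A)=\alpha$) and $\EE(X_T)=\EE(\U_\tau-\D_\tau)=2\alpha-1$ (since $X_T=1$ exactly when $S_\tau=A$). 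Collecting the coefficients of $\alpha$, the constant terms, and the $b\,\EE(T)$ term produces~(\ref{tau*via*T}). I expect the main obstacle to be the rigorous handling of the optional stopping step — specifically, verifying the integrability/boundedness conditions that make $\EE[\sum_{j=1}^T\xi_j]=0$ valid and making sure the conditioning on the random-length vector $(X_1,\dots,X_T)$ is set up correctly — whereas the identification of $a,b$ and the final substitution are routine. A secondary subtlety worth a sentence is that when $A=1$ the sum $\sum_{j=1}^{T-1}$ may be empty on some paths (if $T=1$), but the formula still holds since the empty sum is zero and $\EE(\gamma_1)$ already accounts for the single excursion.
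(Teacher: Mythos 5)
Your proposal is correct and follows essentially the same route as the paper: the paper's proof of this proposition is precisely the chain of identities in Section~4 (conditioning $\gamma_j$ on $(X_{j-1},X_j)$ via the strong Markov property, forming the martingale differences $\xi_j$, applying optional stopping, using the affine form $\EE(\gamma_{j+1}\gv X_j)=aX_j+b$ on the two-point state space, and substituting $\EE(S_\tau)=(A+B)\alpha-B$ and $\EE(X_T)=2\alpha-1$), which is exactly the skeleton you reproduce. Your added verifications — finiteness of $\EE(T)$ and $\EE(\tau)$ under aperiodicity and the integrability check licensing optional stopping — are details the paper leaves implicit, so they supplement rather than alter its argument.
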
}

\section{First-Step Analysis}\label{standard*caculations}

Let $\pi(z)=\PP(Z_1=z)$, $z \in \tilde{\Delta}$, be the initial distribution of Markov chain $\{Z_k\}_{k\geq 1}$.
Note that only one-letter words from $\tilde{\Delta}$ have non-zero probabilities and these are equal to the
initial probabilities of the corresponding letters in the original Markov Chain $\{Y_k\}_{k\geq 1}$.
Let $\mathbf{P}=(P_{st})_{s,t\in\tilde{\Delta}}$ be the transition matrix of $\{Z_k\}_{k\geq 1}$.

First, we will derive the initial distribution and transitional probabilities of the two-state Markov
chain  $\{X_k\}_{k\geq 1}$.  Define the hitting time
\[
 \N = \min\{n>1: Z_n\in\{U,D\}\}.
\]
Then
\[
  \PP(X_1=1)
  =
  \pi(U) + \sum_{z\notin \{U,D\}} \PP(Z_1=z) \PP(Z_\N=U\gv Z_1=z),
\]
and
\[
  \PP(X_1=-1)
  =
  \pi(D) + \sum_{z\notin \{U,D\}} \PP(Z_1=z)\PP(Z_\N=D\gv Z_1=z).
\]
On the other hand, by strong Markov property,
\[
  \PP(X_2=1\gv X_1=1) =
  1 - \PP(X_2=-1\gv X_1=1)=\PP(Z_\N=U\gv Z_1=U)
\]
and
\[
  \PP(X_2=1\gv X_1=-1)=
  1 - \PP(X_2=-1\gv X_1=-1)=\PP(Z_\N=U\gv Z_1=D)
\]
So, it boils down to finding $p_z=\PP(Z_\N = U \gv Z_1=z)$ for every state $z\in \tilde{\Delta}$. For each $z$, by the first-step
analysis, we  have
\[
  p_z = P_{zU} + \sum_{s\notin \{U,D\}} P_{zs} p_s.
\]
Let $\mathbf{p}$ be the column-vector of $p_z$,  $\mathbf{p}_U$ be the column-vector of $P_{zU}$, $\mathbf{I}$ be the identity matrix, and $\mathbf{Q}$ be the matrix with
$$Q_{st} = P_{st} \I_{t\ne \{U,D\}}.$$
Then $$\mathbf{p} = \mathbf{p}_U + \mathbf{Q} \mathbf{p}.$$ Additionally, the following is true.
\begin{Lemma} Matrix $\mathbf{I}-\mathbf{Q}$ is invertible.
\end{Lemma}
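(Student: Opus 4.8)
The plan is to show that $\mathbf{I}-\mathbf{Q}$ is invertible by proving that the only solution of $(\mathbf{I}-\mathbf{Q})\mathbf{x}=\mathbf{0}$ is $\mathbf{x}=\mathbf{0}$, which suffices since $\mathbf{Q}$ is a finite square matrix. Equivalently, I would show that the spectral radius of $\mathbf{Q}$ is strictly less than $1$. The key probabilistic fact is that $\mathbf{Q}$ is the substochastic matrix obtained from the transition matrix $\mathbf{P}$ of $\{Z_k\}_{k\ge 1}$ by deleting the two rows and columns corresponding to the absorbing-for-this-purpose states $U$ and $D$; hence $(\mathbf{Q}^n)_{st}=\PP(Z_2=s_1,\dots,Z_{n+1}=t,\ \text{and } Z_2,\dots,Z_{n+1}\notin\{U,D\}\mid Z_1=s)$ is the probability of an $n$-step path from $s$ to $t$ that avoids $U$ and $D$. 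So $(\mathbf{Q}^n\mathbf{1})_s=\PP(\N>n+1\mid Z_1=s)$, where $\N=\min\{n>1:Z_n\in\{U,D\}\}$ as in the section.

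The main step is therefore to argue that $\PP(\N<\infty\mid Z_1=s)=1$ for every $s\in\tilde\Delta\setminus\{U,D\}$, indeed with finite expectation. This is exactly where Proposition~\ref{Markov*Chain*Z} does the work: by part~(1)--(2), $\{Z_k\}_{k\ge1}$ has a single positive recurrent class containing both $U$ and $D$, and by the remark following it, $U$ and $D$ are reachable from every state of $\tilde\Delta$. Consequently, from any starting state $s$, the chain almost surely hits $\{U,D\}$, and since the state space $\tilde\Delta$ is finite, the hitting time has finite mean (a standard fact: from a finite state space, the expected time to reach a set that is reachable from every state is bounded). In particular $\PP(\N=\infty\mid Z_1=s)=0$, so $\mathbf{Q}^n\mathbf{1}\to\mathbf{0}$ entrywise as $n\to\infty$.

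From $\mathbf{Q}^n\mathbf{1}\to\mathbf 0$ with $\mathbf Q\ge 0$ I would conclude $\mathbf Q^n\to\mathbf 0$ (each entry of $\mathbf Q^n$ is dominated by the corresponding row sum), hence the spectral radius of $\mathbf Q$ is $<1$, so $\mathbf{I}-\mathbf{Q}$ is invertible and moreover $(\mathbf{I}-\mathbf{Q})^{-1}=\sum_{n\ge0}\mathbf Q^n$. Alternatively, and perhaps more cleanly for the writeup: if $(\mathbf I-\mathbf Q)\mathbf x=\mathbf 0$ then $\mathbf x=\mathbf Q\mathbf x=\mathbf Q^n\mathbf x$ for all $n$, and $|\mathbf x|\le\mathbf Q^n|\mathbf x|\le\|\mathbf x\|_\infty\,\mathbf Q^n\mathbf 1\to\mathbf 0$, forcing $\mathbf x=\mathbf 0$. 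The only genuinely non-routine point is the finiteness of the hitting time $\N$ from every transient/auxiliary state, and that has been pre-packaged into Proposition~\ref{Markov*Chain*Z} together with the finiteness of $\tilde\Delta$; everything else is bookkeeping with nonnegative matrices. One small subtlety worth a sentence in the proof is that $\N$ is defined with $n>1$, so when $Z_1=s\notin\{U,D\}$ the event $\{\N>n+1\}$ matches $\{Z_2,\dots,Z_{n+1}\notin\{U,D\}\}$, which is precisely what $\mathbf Q^n$ encodes.
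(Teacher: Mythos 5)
Your proof is correct, but it takes a genuinely different route from the paper's. The paper argues by linear algebra: Perron--Frobenius gives a nonnegative left eigenvector $\mathbf{q}$ for the spectral radius $\rho\le 1$ of $\mathbf{Q}$; assuming $\rho=1$ and noting $q_U=q_D=0$ (the $U$ and $D$ columns of $\mathbf{Q}$ vanish), the inequality $\mathbf{q}'=\mathbf{q}'\mathbf{Q}\le\mathbf{q}'\mathbf{P}$ together with both sides summing to $1$ forces $\mathbf{q}'=\mathbf{q}'\mathbf{P}$, so $\mathbf{q}$ would be the unique stationary distribution of $\{Z_k\}$, contradicting (via Proposition~\ref{Markov*Chain*Z}) that it must put positive mass on the positive recurrent states $U$ and $D$. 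You instead interpret $\mathbf{Q}$ probabilistically as the taboo (killed at $\{U,D\}$) transition matrix, identify the row sums of $\mathbf{Q}^n$ with the tail probabilities $\PP(\N>n+1\gv Z_1=s)$, and use finiteness of $\tilde\Delta$ plus reachability of $\{U,D\}$ from every state to get $\mathbf{Q}^n\mathbf{1}\to\mathbf 0$, hence $\rho(\mathbf{Q})<1$ (or, more directly, a trivial kernel). Both arguments lean on the same structural input---Proposition~\ref{Markov*Chain*Z} and the reachability remark following it---but yours avoids Perron--Frobenius and the uniqueness of the stationary distribution, and it buys the extra information $(\mathbf{I}-\mathbf{Q})^{-1}=\sum_{n\ge0}\mathbf{Q}^n\ge 0$ and geometric tails for $\N$, which sits well with formulas (\ref{p*for*X}) and (\ref{e*for*gamma}); the paper's argument is shorter once the M-matrix/Perron--Frobenius machinery is taken for granted. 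One small mismatch to fix in wording: the paper's $\mathbf{Q}$ is indexed by all of $\tilde\Delta$ with only the $U$ and $D$ \emph{columns} zeroed out, not the principal submatrix with those rows and columns deleted; this is harmless, since either you note that the same hitting argument applies from $Z_1\in\{U,D\}$ as well (those states are positive recurrent, so $\N<\infty$ a.s. from them too), or you observe that the zeroed-column matrix is block triangular with your submatrix and a zero block, so the two matrices have the same nonzero spectrum and invertibility of $\mathbf{I}-\mathbf{Q}$ follows either way.
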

\begin{proof}
Since all the entries of $\mathbf{Q}$ are nonnegative, by Perron--Frobenius theorem,
its spectral radius $\rho$ is also an eigenvalue, and there is a corresponding
left eigenvector $\mathbf{q}$ whose entries are nonnegative with a sum equal to 1.
If $\rho<1$, then $\mathbf{I}-\mathbf{Q}$ a non-singular $M$-matrix.
Since $\mathbf{P}\ge \mathbf{Q}$ (i.e., all entries of $\mathbf{P}-\mathbf{Q}$ are
nonnegative), then $\rho$ cannot be greater than the spectral radius of $\mathbf{P}$, which is 1.

Assume that  $\rho=1$.  Then the eigenvector $\mathbf{q}$ satisfies
\[
    \mathbf{q}' = \mathbf{q}'\mathbf{Q}
\]
with $q_U=0$ and $q_D=0$.
Now, note that entry-wise
\[
    \mathbf{q}' = \mathbf{q}'\mathbf{Q}\leq \mathbf{q}'\mathbf{P}.
\]
Since the components of both the vector $\mathbf{q}'$ and the vector $\mathbf{q}' \mathbf{P}$
are non-negative and sum to 1, we, in fact, have
\[
    \mathbf{q}' = \mathbf{q}'\mathbf{P}.
\]
That is, $\mathbf{q}$ is a stationary distribution of Markov chain $\{Z_k\}_{k\geq 1}$.
But according to Proposition~\ref{Markov*Chain*Z}, the stationary distribution
is unique, $U$ and $D$ are recurrent positive states. Therefore, $q_U$ and $q_D$ must be
strictly positive, which leads to a contradiction.
\end{proof}
Since $\mathbf{I}
- \mathbf{Q}$ is invertible, we get that
\begin{equation}\label{p*for*X}
\mathbf{p} = (\mathbf{I} - \mathbf{Q})^{-1} \mathbf{p}_U.
\end{equation}
Second, let us calculate $\EE(\gamma_1)$, $\EE(\gamma_2\gv X_{1}=1)$, and $\EE(\gamma_2\gv X_{1}=-1)$.
Note that
\[
  \EE(\gamma_1) = \pi(U)+\pi(D)+ \sum_{s\notin \{U,D\}} \pi(s)\EE(\N\gv Z_1=s).
\]
Also, $\EE(\gamma_2 \gv X_{1}=1) = \EE(\N\gv Z_1=U)-1$ and $\EE(\gamma_2 \gv X_{1}=-1) = \EE(\N\gv Z_1=D)-1$.
Thus, all we need to do is calculate $e_z=\EE(\N\gv Z_1=z)$ for every state $z$. Then, by applying the first-step
analysis we  have
\[
  e_z = 2P_{zU}+2P_{zD}+ \sum_{s\notin \{U,D\}} P_{zs}\big[\EE(\N\gv Z_1=s)+1\big]=1+P_{zU}+P_{zD}+ \sum_{s\notin \{U,D\}} P_{zs}\EE(\N\gv Z_1=s).
\]
Let $\mathbf{e}$ be the column-vector of $e_z$, $\mathbf{p}_D$ be
the column-vector of $P_{zD}$, and $\mathbf{1}$ be the column-vector of ones.  Then
\begin{equation}\label{e*for*gamma}
\mathbf{e} = (\mathbf{I} - \mathbf{Q})^{-1} [\mathbf{1}+\mathbf{p}_U+\mathbf{p}_D].
\end{equation}

\section{An Example}

Consider a sequence generated by flips of a fair coin. That is, $\{Y_k\}_{k\geq 1}$ is a
sequence of independent identically distributed letters over alphabet $\Delta=\{0,1\}$
with $\PP(0)=\PP(1)=1/2$. Let the word $U=11$ and the word $D=01$. Then the state space of
the embedded  Markov chain  $\{Z_k\}_{k\geq 1}$ is $\tilde{\Delta}=\{1,0,11,01\}$. The
initial distribution $(1/2,1/2,0,0)$ and the transition matrix
$$
\mathbf{P}=\begin{pmatrix}
               0 & 1/2 & 1/2 & 0 \\
               0 & 1/2 & 0 & 1/2 \\
               0 & 1/2 & 1/2 & 0 \\
               0 & 1/2 & 1/2 & 0 \\
             \end{pmatrix}.
$$
Then
$$
\mathbf{p}_U=\begin{pmatrix}
               1/2  \\
               0  \\
               1/2  \\
               1/2  \\
             \end{pmatrix},\quad\quad
\mathbf{p}_D=\begin{pmatrix}
               0  \\
               1/2  \\
               0  \\
               0  \\
             \end{pmatrix},\quad\quad
\mathbf{Q}=\begin{pmatrix}
               0 & 1/2 & 0 & 0 \\
               0 & 1/2 & 0 & 0 \\
               0 & 1/2 & 0 & 0 \\
               0 & 1/2 & 0 & 0 \\
             \end{pmatrix}.
$$
Formula (\ref{p*for*X}) gives us
$$
\mathbf{p}'=\begin{pmatrix}
               1/2  &
               0    &
               1/2  &
               1/2
             \end{pmatrix}.
$$
As a consequence, we have
$$\PP(X_1=1)=1/4,\quad\quad\PP(X_2=1\gv X_1=1)=1/2,\quad\quad \PP(X_2=1\gv X_1=-1)=1/2.$$
Using results on the gambler's ruin problems for correlated random walk we obtain that
$$\alpha=\frac{B-1/2}{A+B},$$
and
$$\EE(T)=AB+(B-A)/2.$$
Next, formula (\ref{e*for*gamma}) leads us to
$$
\mathbf{e}'=\begin{pmatrix}
               3 &
               3 &
               3 &
               3
             \end{pmatrix},
$$
and, therefore,
$$\EE(\gamma_1=1)=3,\quad\quad\EE(\gamma_2\gv X_1=1)=2,\quad\quad \EE(\gamma_2\gv X_1=-1)=2.$$
Finally, using (\ref{tau*via*T}) we get that
$$\EE(\tau)=2AB+B-A+1.$$

It is known (for instance, see \cite{Lladser2007}) that the construction of  Markov chain $\{Z_k\}_{k\geq 1}$ given in
Section~\ref{embedded*mc*section} is not always optimal. Note that in our example of
the $11$ vs $01$ game the state $1$ is a transient state of $\{Z_k\}_{k\geq 1}$. This means that a
Markov chain with a smaller state space can be embedded. This was exploited in
\cite{Pozdnyakov2025}, where both $\alpha$ and $\EE(\tau)$ were found by building
appropriate martingales for correlated random walks with delays. However, the martingale
approach of \cite{Pozdnyakov2025} cannot be used if we consider a game with different
words (for example, $11$ vs $00$).

The progress  made here is twofold. First, the method will work even if
the text is not generated by an i.i.d. source. Second, and perhaps more importantly,
the expected waiting time $\EE(\tau)$ cannot be computed with help of Markov chain
$\{X_k\}_{k\geq 1}$ alone, because it ignores zero payments. For this, we need
Markov chain $\{Z_k\}_{k\geq 1}$ and formula~(\ref{tau*via*T}) that connects
$\EE(\tau)$  and $\EE(T)$.

\section{When is $\tau$ finite?}

As we mentioned above, if two words $U$ and $D$ are given to us, then in order to
check that $\PP(\tau<\infty)=1$, we need to construct the two-state Markov chain
$\{X_k\}_{k\geq 1}$ and verify that it is aperiodic. But another interesting question
is: what conditions on the Markov chain of letters $\{Y_k\}_{k\geq 1}$ are needed to guarantee
that for any two words $U$ and $D$ that satisfy (\ref{word*probability})
we have $\PP(\tau<\infty)=1$?

The answer turns out to be not that trivial. First of all, it depends on the size of
the alphabet $\Delta$. If $|\Delta|=2$, there are pairs of words $U$ and $D$ for
which the two-state Markov chain $\{X_k\}_{k\geq 1}$ is periodic for almost any transition
matrix of the Markov chain $\{Y_k\}_{k\geq 1}$. More specifically, we have the following result.
\begin{Proposition}
Let the Markov chain $\{Y_k\}_{k\geq 1}$ with state set $\Delta=\{0,1\}$ be irreducible and
aperiodic (or, equivalently, its transition matrix is strictly positive).
Then the Markov chain $\{X_k\}_{k\geq 1}$ is periodic if and only if
we have one of the following combinations of words: $U = y^t(1-y)^s$ and
$D = (1-y)^sy^t$, where $y \in \Delta$, $t, s \geq 1$, but either $s = 1$ or
$t = 1$. Here, $y^t$ denotes a word formed by repeating the letter $y$ $t$ times.
\end{Proposition}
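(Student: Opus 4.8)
The plan is to characterize when the two-state chain $\{X_k\}$ is periodic, i.e., when it is a perfect alternation $1,-1,1,-1,\dots$. By the remark in Section~\ref{embedded*mc*section}, $\{X_k\}$ is periodic exactly when, starting from $Z_1$ a single letter, consecutive word-completions alternate between $U$ and $D$ with probability one. Equivalently, after an occurrence of $U$ completes (so $Z_{\tau_k}=U=u_1\cdots u_K$), the next word to complete is \emph{always} $D$, and symmetrically. So the whole argument reduces to analyzing what words can possibly complete next, given that the text so far ends in $U$ (resp.\ $D$), under the assumption that the transition matrix of $\{Y_k\}$ is strictly positive, which means \emph{every} letter string is a legal continuation. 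The key combinatorial fact I will use repeatedly: with a strictly positive transition matrix, any finite word over $\Delta$ that is consistent with the suffix constraint can actually occur, so ``always $D$ next'' is a purely word-combinatorial condition, not a probabilistic one.

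First I would prove the ``if'' direction. Take $y\in\Delta$ and consider $U=y^t(1-y)^s$, $D=(1-y)^sy^t$ with (say) $s=1$, so $U=y^t(1-y)$ and $D=(1-y)y^t$. Suppose the text has just completed an occurrence of $U$; the relevant suffix is $U=y^t(1-y)$. The gambler now reads further letters. I claim the first word among $\{U,D\}$ to complete must be $D$: since the current suffix ends in the single letter $1-y$, and the transition matrix is strictly positive, the next letters are arbitrary; but for $U=y^t(1-y)$ to complete we would need to see $t$ consecutive $y$'s followed by $1-y$, while for $D=(1-y)y^t$ to complete we only need $t$ consecutive $y$'s (the leading $1-y$ of $D$ is already in place, being the last letter of $U$). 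Reading letters one at a time: if we ever see a $1-y$ before accumulating $t$ consecutive $y$'s, we reset but still have a trailing $1-y$; as soon as we see $t$ consecutive $y$'s, $D$ completes, and this happens strictly before $U$ could complete (which needs those same $t$ $y$'s \emph{plus} a following $1-y$). Hence $D$ always follows $U$. The symmetric computation (text ends in $D=(1-y)y^t$, so trailing letter $y$; to complete $U=y^t(1-y)$ we need $t-1$ more $y$'s then a $1-y$, whereas $D$ needs $t$ fresh $y$'s starting from a $1-y$ — here one must check the overlap bookkeeping with the trailing $y^t$ carefully, since $D$ ends in $y^t$ and $y^t$ is a prefix issue) shows $U$ always follows $D$. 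The $t=1$ case is the mirror image under swapping the roles of letters. So in these cases $\{X_k\}$ is a perfect alternation, hence periodic.

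Next, the ``only if'' direction, which is the main obstacle. Assume $\{X_k\}$ is periodic; I must show $U,D$ have the stated form. Suppose w.l.o.g.\ that $D$ always follows $U$. Using strict positivity, I would argue as follows. Write the text as having just completed $U$; the trailing content is $U$ itself. Since the next completion is forced to be $D$ and we may append \emph{any} letters, consider appending $U$'s own first letter $u_1$, then $u_2$, and so on: we would re-complete $U$ unless some overlap with $D$ intervenes. For $\{X_k\}$ to never re-complete $U$ (and never complete anything but $D$), $D$ must ``interrupt'' every attempt to re-spell $U$ after $U$. Playing this forcing game symmetrically — $U$ must interrupt every attempt to re-spell $D$ after $D$ — gives a tight pair of word equations. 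The standard tool here is the theory of correlation/overlap polynomials of pairs of words (Guibas–Odlyzko): the conditions ``$D$ is a proper internal completion of $UU\cdots$ that always triggers first'' and its symmetric partner translate into statements about the overlap sets of $U$ with itself, $D$ with itself, $U$ with $D$, and $D$ with $U$. I would extract from these that both $U$ and $D$ must be ``almost unary'': each is a run of one letter followed by a run of the other, i.e.\ $U=a^t b^s$ and $D=b^{s'}a^{t'}$ for letters $a\ne b$ (the fact that the two letters in $D$ are the reverse pattern of $U$'s comes from the requirement that completing one word leaves the text positioned to be interrupted by the other). Then matching the interruption lengths forces the exponents to agree, $t=t'$, $s=s'$, and the forcing game fails to be deterministic unless one of $s,t$ equals $1$: if both $s\ge 2$ and $t\ge 2$, I would exhibit an explicit continuation after $U$ that re-completes $U$ before any occurrence of $D$ (e.g.\ append $a^t b^s$ directly; with strictly positive transitions this has positive probability, contradicting determinism). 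That explicit counterexample is the crux, and assembling it cleanly — checking that no occurrence of $D$ sneaks in earlier via overlap when $s,t\ge2$ — is where the real work lies; the rest is bookkeeping with overlap sets. Finally, since $U$ and $D$ are interchangeable up to relabeling which one ``comes first,'' and the two letters are interchangeable, the full list of periodic cases is exactly $\{U=y^t(1-y)^s,\ D=(1-y)^sy^t\}$ with $s=1$ or $t=1$, as claimed.
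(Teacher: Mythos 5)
Your overall reduction is the same as the paper's: since the transition matrix is strictly positive, every finite string is a possible continuation, so periodicity of $\{X_k\}$ (which for a two-state chain means perfect alternation of $U$- and $D$-completions) becomes a purely combinatorial condition on the pair of words, to be settled by exhibiting suitable texts. Your ``if'' direction is essentially the paper's ``direct check'' and looks sound, modulo the overlap bookkeeping you yourself flag. The problem is the ``only if'' direction, which is where all the content lies, and there your proposal has a genuine gap on two counts. First, the structural step --- that $U$ and $D$ must be of the form $a^tb^s$ and $b^sa^t$ with the \emph{same} exponents --- is only asserted via an appeal to Guibas--Odlyzko correlation sets that you never actually apply; the paper proves it concretely by analyzing occurrences in the explicit texts $U0^nU$, $U0^n1^nU$, $UU$ and $DD$, and nothing in your sketch substitutes for that analysis.

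Second, and more seriously, your one explicit construction for the crux (ruling out $s,t\ge 2$) is wrong. You propose to append $a^tb^s$ right after $U=a^tb^s$ and claim $U$ re-completes before any $D$. But in the text $UU=a^tb^sa^tb^s$ the word $D=b^sa^t$ occurs straddling the junction (the trailing $b^s$ of the first $U$ followed by the leading $a^t$ of the second), and it completes at position $2t+s$, strictly before the second $U$ completes at position $2t+2s$. So this text exhibits the alternation $U,D,U$ and yields no contradiction at all --- indeed it is consistent with periodicity for every $s,t\ge1$, which is exactly why the case split on $s,t$ is delicate. The paper's construction is different: it takes the text $D(01)^nD$ and checks that when $s,t\ge2$ the word $U=1^t0^s$ cannot occur anywhere in it (runs of $1$'s inside $(01)^n$ have length $1<t$, and the initial $1^t$ is followed by a single $0$, not $0^s$), so two $D$'s occur with no intervening $U$, contradicting alternation. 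Without a correct construction of this kind, your argument does not establish that both exponents exceeding $1$ is impossible, and the ``only if'' direction remains unproven.
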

\begin{proof}
Assume that $U$ is always followed by $D$, and $D$ is always followed by $U$.
First, it is obvious that neither $U$ nor $D$ are runs; both must include both
the letters $0$ and $1$. Assume the last letter in $D$ is $1$. Consider the following
possible text: $U0^nU$, where $n$ is greater than the maximum of  lengths of $D$ and $U$.
Since $U$ can only occur at the beginning and the end of the text $U0^nU$,
$D$ must occur in the middle. Moreover, the last letter of $D$, which is $1$,
must be in the second $U$ at the end.  This means that $D$ starts with a run of $0$
(with a length of at least one).

Next, let us consider another possible text: $U0^n1^nU$. If $U=0^i1^j$ (that is, there
is a $U$ in the middle of $U0^n1^nU$), then by considering $U0^nU$, we get that $D=0^t1^s$,
where $t, s \geq 1$. If $U\neq 0^i1^j$, then there are only two $U$s in $U0^n1^nU$, and the
last $1$ of $D$ must be in $1^nU$. Therefore, again $D=0^t1^s$, where $t, s \geq 1$.
Using the same argument, we get that $U=0^i1^j$ or $U=1^i0^j$, where $i, j \geq 1$.
But by considering the text $DD$, we see that $U=0^i1^j$ must be excluded because
it cannot occur in $DD$ (remember that $U$ is not a subword of $D$). Thus, $U=1^i0^j$. If $t<i$,
then $U$ cannot occur in $DD$. Therefore, $t \geq i$.
By considering the occurrence of $D$ in $UU$, we get that $t=i$.
Using similar arguments, we also have $s=j$, that is, $D=0^s1^t$ and $U=1^t0^s$.

Finally, if both $s,t>1$, consider the text $D(01)^nD$ and observe that $U$ cannot
occur in this text. Therefore, either $s=1$ or $t=1$ (or both). A direct check
shows that if $D=0^s1^t$ and $U=1^t0^s$, with $t,s\geq 1$, either $s = 1$ or
$t = 1$, then $U$ and $D$ alternates in any text generated by $\{Y_k\}_{k\geq 1}$.
The other two cases are obtained by interchanging $0$ and $1$.
\end{proof}

The case $|\Delta|>2$ is different. For example, the following is true.
\begin{Proposition}
Suppose that Markov chain $\{Y_k\}_{k\geq 1}$ with finite state set $\Delta=\{0,1,2,\dots\}$
has a strictly positive transition matrix. Then the Markov chain $\{X_k\}_{k\geq 1}$ is
aperiodic  for any two words $U$ and $D$.
\end{Proposition}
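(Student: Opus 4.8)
The plan is to prove aperiodicity by producing, with positive probability, two consecutive renewal occurrences $\tau_k,\tau_{k+1}$ that yield the same value of $X$. Since both $U$ and $D$ occur infinitely often, $\{X_k\}_{k\ge1}$ visits both states, hence is irreducible, and an irreducible two‑state chain has period $2$ only when both diagonal transition probabilities vanish; so exhibiting such a pair of renewals forces $\PP(X_{k+1}=1\gv X_k=1)>0$ or $\PP(X_{k+1}=-1\gv X_k=-1)>0$, i.e.\ aperiodicity. Moreover, because the transition matrix of $\{Y_k\}_{k\ge1}$ is strictly positive, every finite word occurs as a factor of the text, at positions $\ge 2$, with positive probability. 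Thus it suffices to produce a single finite word in which two consecutive occurrences of $U$, or two consecutive occurrences of $D$, in the enumeration $\tau_1<\tau_2<\cdots$, are of the same type.

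First I would dispose of the case that $U$ or $D$ is a constant run. If $U=c^{K}$, then since $U$ and $D$ are not subwords of one another, $D$ is not a power of $c$; hence in the word $c^{K+1}$ the pattern $U$ ends at the last two positions while $D$ does not occur at all, giving two consecutive renewals of type $U$. The case $D=c^{M}$ is symmetric, so from now on neither $U$ nor $D$ is a run.

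For the main construction, use $|\Delta|\ge3$ to fix a letter $z$ different from the last letter of $U$ and from the last letter of $D$, and for $N$ large consider the two padded double words
\[
  W = z^{N} U z^{N} U z^{N}, \qquad W' = z^{N} D z^{N} D z^{N}.
\]
I would show that in $W$ the two displayed copies of $U$ are consecutive renewals, both of type $U$, \emph{unless} $D=z^{M-i}u_1\cdots u_i$ for some $1\le i<K$; symmetrically, $W'$ gives two consecutive renewals of type $D$ unless $U=z^{K-j}d_1\cdots d_j$ for some $1\le j<M$. The verification is a routine factor analysis: no occurrence of $U$ or $D$ ends inside a $z$-block, since their last letters differ from $z$; an occurrence of $U$ landing strictly inside a $U$-copy must cross a $z$-block, which forces $U$ to begin with a run of $z$'s that is also a period of $U$, hence $U=z^{K}$, excluded; an occurrence of $D$ ending strictly inside a $U$-copy, or at the right end of the first $U$-copy, is either a subword of $U$ (excluded) or the stated exceptional identity, the remaining alternative ($M>K$) being excluded because it would make $U$ a subword of $D$.

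The step I expect to be the main obstacle is ruling out that \emph{both} exceptional identities hold simultaneously — exactly the configuration that would leave neither $W$ nor $W'$ usable. Assuming both hold, I would substitute one identity into the other; after discarding the sub-cases that directly force a run, this yields $U=z^{p}\,u_1\cdots u_{K-p}$ for some integer $p$ with $1\le p<K$, which says at once that the first $p$ letters of $U$ are $z$ and that $p$ is a period of $U$; propagating the period then makes every letter of $U$ equal to $z$, contradicting that $U$ is not a run. The bookkeeping with the four lengths $i,j,K,M$ together with this Fine--Wilf-type propagation is the only genuinely delicate point; everything else is inspection of factors of explicit words. Combining the three cases — $U$ or $D$ a run, $W$ works, $W'$ works — shows in every case that a finite word containing two consecutive same-type renewals occurs with positive probability, so $\{X_k\}_{k\ge1}$ is aperiodic for every admissible pair $U,D$.
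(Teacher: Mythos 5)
Your proposal is correct, but it reaches aperiodicity by a genuinely different mechanism than the paper. The paper argues by contradiction directly from the alternation hypothesis, padding the \emph{same} word $U$ with runs of three \emph{different} letters: in each of the texts $U0^nU$, $U1^nU$, $U2^nU$ (with $n$ large) a copy of $D$ must end strictly between the two $U$'s, which forces $D$ to begin or end with $0$, to begin or end with $1$, and to begin or end with $2$; since $D$ has only one first and one last letter, this is a pigeonhole contradiction, and no word-periodicity considerations are needed. You instead pad with a single letter $z$ chosen to avoid the last letters of $U$ and $D$, analyze both $z^NUz^NUz^N$ and $z^NDz^NDz^N$, and show that each text either exhibits two consecutive same-type renewals or pins down an explicit structural identity ($D=z^{M-i}u_1\cdots u_i$, respectively $U=z^{K-j}d_1\cdots d_j$), whose simultaneous validity you exclude by a Fine--Wilf-type period-propagation argument (your substitution does work: either $j\le M-i$, giving $U=z^K$ outright, or $U=z^{p}u_1\cdots u_{K-p}$ with $p=K-j+M-i$, and the period $p$ together with the $z$-prefix again forces a run); your preliminary treatment of the run case via $c^{K+1}$ plays the role of the paper's remark that neither word can be a run. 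The paper's three-letter trick is shorter and avoids both the $D$-based text and the periodicity lemma, because the alternation hypothesis spares it from classifying all occurrences in the padded texts; your version costs more case analysis but is more explicit about which occurrences can actually appear and isolates the exact combinatorial obstruction, which is essentially the exceptional word pattern that the paper's first Proposition identifies on a two-letter alphabet. Both arguments use $|\Delta|\ge 3$ in an essential way, and yours is complete as outlined.
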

\begin{proof}
Assume that the words $D$ and $U$ alternate. As before, neither $U$ nor $D$ can be runs.
Consider the text $U0^nU$, where $n$ is greater than the length of $D$. The occurrence
of $D$ along this path implies that $D$ either starts with $0$ or ends with $0$. Now, if
we consider the text $U1^nU$, it indicates that $D$ either starts with $1$ or
ends with $1$. Therefore, $D$ must be of the form $0\dots 1$ or $1\dots 0$.
However, this contradicts the requirement that $D$ must also occur in the text $U2^nU$.
\end{proof}
However, the exact characterization of  Markov chains $\{Y_k\}_{k\geq 1}$ that guarantees
$\PP(\tau<\infty)=1$ for any two words $U$ and $D$ that satisfy (\ref{word*probability})
is an open question.

\section{Concluding Remarks}

Note that the developed technique allows us to find the ruin probability and the mean duration time
for the gambler’s ruin problem in a special case of the so-called {\it Markov random walk}
(see, for example, \cite{Grama_etal2018}). In our case, the walk goes up by 1 when
the associated finite Markov chain is in a particular state, goes down by 1 when it is in another
state, and remains unchanged when the chain is in other states.

\bibliographystyle{mcap}
\bibliography{crwm}

\end{document}